\newcommand{\A}{{\mathbb{A}}}
\newcommand{\C}{{\mathbb{C}}}
\newcommand{\colim}{\operatorname*{colim}}
\newcommand{\hocolim}{\operatorname*{hocolim}}
\newcommand{\holim}{\operatorname*{holim}}
\newcommand{\Hom}{\mathrm{Hom}}
\newcommand{\Imm}{\mathrm{Im}\,}
\newcommand{\Sm}{\mathbf{Sm}}
\newcommand{\Smb}{\overline{\Sm}}
\newcommand{\Smknis}{\Sm_{k,\mathrm{Nis}}}
\newcommand{\sPre}{\mathbf{sPre}}
\newcommand{\sPresm}{\sPre(\Sm)}
\newcommand{\sPresmb}{\sPre(\Smb)}
\newcommand{\sPresmc}{\sPre(\Sm_{\C})}
\newcommand{\sPreBh}{\sPre(\Bh)}
\newcommand{\sPreTh}{\sPre(\Th)}
\newcommand{\sPreVh}{\sPre(\Vh)}
\newcommand{\hosPresm}{\mathrm{ho}\sPresm}
\newcommand{\hosPresmb}{\mathrm{ho}\sPresmb}
\newcommand{\hosPresmc}{\mathrm{ho}\sPresmc}
\newcommand{\hosPreBh}{\mathrm{ho}\sPreBh}
\newcommand{\hosPreTh}{\mathrm{ho}\sPreTh}
\newcommand{\hosPreVh}{\mathrm{ho}\sPreVh}
\newcommand{\sS}{\mathbf{sS}}
\newcommand{\Ah}{{\mathcal A}}
\newcommand{\oAh}{\overline{\Ah}}
\newcommand{\Bh}{\mathcal{B}}
\newcommand{\Ch}{{\mathcal C}}
\newcommand{\Fh}{{\mathcal F}}
\newcommand{\Gh}{{\mathcal G}}
\newcommand{\Kh}{\mathcal{K}}
\newcommand{\Th}{{\mathcal T}}
\newcommand{\Uh}{{\mathcal U}}
\newcommand{\Vh}{{\mathcal V}}
\newcommand{\oU}{\overline{U}}
\newcommand{\oX}{\overline{X}}
\newcommand{\oY}{\overline{Y}}
\newcommand{\oOmega}{\overline{\Omega}}
\newcommand{\into}{\hookrightarrow}
\newtheorem{theorem}{Theorem}[section]
\newtheorem{lemma}[theorem]{Lemma}
\newtheorem{prop}[theorem]{Proposition}
\newtheorem{cor}[theorem]{Corollary}
\theoremstyle{definition}
\newtheorem{defn}[theorem]{Definition}
\newtheorem{remark}[theorem]{Remark}
\begin{document}
\title{Homotopy theory of smooth compactifications of algebraic varieties}
%\author{Michael J. Hopkins}
%\address{Department of Mathematics, Harvard University, Cambridge, MA 02138, USA}
%\email{mjh@math.harvard.edu}
%
\author{Gereon Quick}
%\thanks{Both authors were supported in part by DARPA under HR0011-10-1-0054-DOD35CAP. The first author was supported in part by the National Science Foundation under DMS-0906194. The second author was supported by  the German Research Foundation (DFG)-Fellowship QU 317/1}
\address{Mathematisches Institut, WWU M\"unster, Einsteinstr. 62, 48149 M\"unster, Germany}
\email{gquick@math.uni-muenster.de}
\date{}
%\maketitle
\begin{abstract}
In this short note we show that the homotopy category of smooth compactifications of smooth algebraic varieties is equivalent to the homotopy category of smooth varieties over a field of characteristic zero.  As an application we show that the functor sending a variety to the p-th step of the Hodge filtration of its complex cohomology is representable in the homotopy category of simplicial presheaves on smooth complex varieties.  The main motivation are recent applications of homotopy theory to Deligne-Beilinson cohomology theories.
\end{abstract}

\maketitle

\section{Introduction}

Let $X$ be a smooth algebraic variety. In order to have well-behaved algebraic invariants it is often necessary to assume that $X$ behaves like a compact manifold, i.e. that $X$ is complete or projective. If $X$ is not complete, several kinds of  unpleasant phenomena may occur. For example the cohomology groups may be infinite-dimensional or, over the complex numbers, the Hodge filtration on the de Rham cohomology may contain very little information.

There are different techniques to remedy these defects. Over a field $k$ of characteristic zero, Hironaka's theorem on resolutions of singularities shows that every smooth variety over $k$ has a smooth compactification. This means that there is a smooth projective variety over $k$ which contains $X$ as an open subset such that the complement $D=\oX-X$ is a normal crossing divisor. If $k=\C$ such a smooth compactification provides a way to define the Hodge filtration on the complex cohomology of $X$ (see \cite{hodge2}). Beilinson used smooth compactifications to define a new version of Deligne cohomology (see \cite{beilinson}). In \cite{beilinson2}, Beilinson applies smooth compactifications and several arithmetic variations of this notion to study derived de Rham cohomology.

In recent approaches homotopy theory of simplicial presheaves is used to provide applications and generalizations of Deligne-Beilinson cohomology (see \cite{bunketamme} for applications to the regulator map in algebraic $K$-theory, \cite{holmscholbach} for a construction of motivic Arakelov cohomology, and \cite{hfcbordism} for a generalization of Deligne-Beilinson cohomology with potential applications to the cycle map). All these approaches require that the functor $X\mapsto F^pH^*(X;\C)$ which sends a complex variety $X$ to the $p$-th step of the Hodge filtration of its complex cohomology is representable in the homotopy category. By working with presheaves on the big site of smooth varieties, one would like to avoid the choice of a particular compactification. The obvious solution is to take into account all compactifications at once. %But this requires a little care.  

The purpose of this short note is to show that the homotopy category of smooth compactification and the homotopy categories of smooth varieties are in fact equivalent. 
In more detail, let $\Sm=\Sm_k$ be the category of smooth varieties over a field $k$ of characteristic zero and $\Smb$ be the category of pairs $(X,\oX)$ of a smooth variety $X$ together with a smooth compactification $\oX$.  We show that the forgetful functor $u:\Smb \to \Sm$, $(X,\oX) \mapsto X$, induces a pair of  Quillen equivalences 
\[
Lu^{\ast}:\hosPresmb \stackrel{\sim}{\leftrightarrow} \hosPresm:Ru_{\ast} 
\]
on the homotopy categories of simplicial presheaves. This is a reinterpretation of the insight of Beilinson in \cite{beilinson2} in terms of the homotopy theory of simplicial presheaves. 

As an application of this observation we study in the last section how, for every $p\ge 0$, the functor $X\mapsto F^pH^*(X;\C)$ is representable in $\hosPresmc$.

%
%%%%%%%%%%%%%%%%%%%%%%%%%%%%%%%%%
%

\section{Generalized bases of topologies and homotopy theory}

\subsection{Simplicial presheaves and hypercovers}

Let $\Th$ be an essentially small site. We denote by $\sPreTh$ the category of simplicial presheaves on $\Th$. There are several important model structures on the category $\sPreTh$ (see \cite{jardine}, \cite{blander}, \cite{dugger}, \cite{isaksen}.).

We will consider the local projective model structure as in \cite{blander} and \cite{dugger}. We start with the projective model structure on $\sPreTh$. A map $\Fh \to \Gh$ in $\sPreTh$ is called a 
\begin{itemize}
\item projective weak equivalence if $\Fh(X) \to \Gh(X)$ is a weak equivalence in $\sS$ for every $X\in \Th$;
\item projective fibration if every $\Fh(X) \to \Gh(X)$ is a fibration in $\sS$;
\item projective cofibration if it has the left lifting property with respect to all trivial fibrations.
\end{itemize}

In order to obtain a local model structure, i.e. one which respects the topology on the site $\Th$, we can localize the projective model structure at a specific type of hypercovers. We briefly recall the most important notions.

Recall from \cite{verdier1} that a map $f: \Fh \to \Gh$ of presheaves on $\Th$ is called a {\it generalized cover} if for any map $X\to \Fh$ from a representable presheaf $X$ to $\Fh$ there is a covering sieve $R \into X$ such that for every element $U \to X$ in $R$ the composite $U\to X \to \Fh$ lifts through $f$. 

Moreover, Dugger and Isaksen \cite[\S 7]{di} give the following characterization of local acyclic fibrations and hypercovers. A map $f: \Fh \to \Gh$ of simplicial presheaves on $\Th$ is a {\it local acyclic fibration} if for every $X\in \Th$ and every commutative diagram 
\[
\xymatrix{
\partial\Delta^n \otimes X \ar[r] \ar[d] & \Fh \ar[d]\\
\Delta^n\otimes X \ar[r] & \Gh}
\]
there exists a covering sieve $R\into X$ such that for every $U\to X$ in $R$, the diagram one obtains from restricting from $X$ to $U$ 
\[
\xymatrix{
\partial\Delta^n \otimes U \ar[r] \ar[d] & \Fh \ar[d]\\
\Delta^n\otimes U \ar@{.>}[ur] \ar[r] & \Gh}
\]
has a lifting $\Delta^n\otimes U \to \Fh$. Note that this implies in particular that the map $\Fh_0 \to \Gh_0$ of presheaves is a generalized cover.

\begin{defn}
Let $X$ be an object of $\Th$ and let $\Uh$ be a simplicial presheaf on $\Th$ with an augmentation map $\Uh \to X$ in $\sPreTh$. This map is called a {\it hypercover} of $X$ if it is a local acyclic fibration and each $\Uh_n$ is a coproduct of representables. 
\end{defn}

If $\Uh \to X$ is a hypercover, then the map $\Uh_0 \to X$ is a cover in the topology on $\Th$. Moreover, the map $\Uh_1 \to \Uh_0 \times_X \Uh_0$ is a generalized cover. In general, for each $n$, the face maps combine such that $\Uh_n$ is a generalized cover of a finite fiber product of different $\Uh_k$ with $k < n$.

Since the projective model structure on $\sPreTh$ is cellular, proper and simplicial, it admits a left Bousfield localization with respect to all maps 
\[
\{\hocolim \Uh_* \to X\}
\] 
where $X$ runs through all objects in $\Th$ and $\Uh$ runs through the hypercovers of $X$. The resulting model structure is the {\it local projective model structure} on $\sPreTh$ (see \cite{blander} and \cite{dugger}). The weak equivalences, fibrations and cofibrations in the local projective model structure are called {\it local weak equivalences}, {\it local projective fibrations} and {\it local projective cofibrations} respectively. The local acyclic fibrations that were defined above are exactly the maps which are both local weak equivalences and local projective fibrations (see \cite[\S 3]{dhi}). We denote the resulting homotopy category by $\hosPreTh$.

Dugger, Hollander and Isaksen showed that the fibrations in the local projective model structure on $\sPreTh$ have a nice characterization (see \cite[\S\S 3+7]{dhi}). Let $\Uh \to X$ be a hypercover in $\sPreTh$ and let $\Fh$ be a projective fibrant simplicial presheaf. Since each $\Uh_n$ is a coproduct of representables, we can form a product of simplicial sets $\prod_a \Fh((U_n)^a)$ where $a$ ranges over the representable summands of $\Uh_n$. The simplicial structure of $\Uh$ defines a cosimplicial diagram in $\sS$  
\[
\prod_a \Fh(U_0^a) \rightrightarrows \prod_a \Fh(U_1^a) \overset{\longrightarrow}{\underset{\longrightarrow}{\to}} \cdots 
\]
The homotopy limit of this diagram is denoted by $\holim_{\Delta}\Fh(\Uh)$.

Following \cite[Definition 4.3]{dhi} we say that a simplicial presheaf $\Fh$ satisfies descent for a hypercover $\Uh \to X$ if there is a projective fibrant replacement $\Fh \to \Fh'$ such that the natural map 
\begin{equation}\label{descentmap}
\Fh'(X) \to \holim_{\Delta}\Fh'(\Uh)
\end{equation}
is a weak equivalence. It is easy to see that if $\Fh$ satisfies descent for a hypercover $\Uh \to X$, then the map \eqref{descentmap} is a weak equivalence for {\it every} projective fibrant replacement $\Fh \to \Fh'$. By \cite[Corollary 7.1]{dhi}, the local projective fibrant objects in $\sPreTh$ are exactly those simplicial presheaves which are projective fibrant and satisfy descent with respect to all hypercovers $\Uh \to X$. For our final applications we will need the following facts. 

\begin{lemma}\label{descent}
Let $\Fh$ be a simplicial presheaf that satisfies descent with respect to all hypercovers. Then every fibrant replacement $\Fh \to \Fh_f$ in the local projective model structure is a projective weak equivalence, i.e. for every object $X\in \Th$ the map 
\[
\Fh(X) \to \Fh_f(X)
\]
is a weak equivalence of simplicial sets.
\end{lemma}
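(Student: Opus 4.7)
The plan is to reduce the claim to a standard fact about left Bousfield localizations combined with the Dugger--Hollander--Isaksen characterization of local fibrancy already cited above. The key observation is that the local projective model structure is a left Bousfield localization of the projective model structure, and the hypothesis on $\Fh$ places $\Fh$ essentially halfway to being locally projective fibrant.

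First, I would choose any projective fibrant replacement $\Fh \to \Fh'$. This map is a projective trivial cofibration, hence also a local projective trivial cofibration. The hypothesis that $\Fh$ satisfies descent with respect to all hypercovers, combined with the observation recorded after \eqref{descentmap} that descent is then inherited by every projective fibrant replacement, implies that $\Fh'$ itself satisfies descent. By \cite[Corollary 7.1]{dhi}, $\Fh'$ is therefore locally projective fibrant.

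Next, let $\Fh \to \Fh_f$ denote any fibrant replacement in the local projective model structure, so a local trivial cofibration with $\Fh_f$ locally projective fibrant. Since $\Fh \to \Fh'$ is a local trivial cofibration and $\Fh_f$ is locally fibrant, the lifting axiom of the local projective model structure produces a map $g : \Fh' \to \Fh_f$ making the triangle over $\Fh$ commute. By two-out-of-three, $g$ is a local weak equivalence between two locally projective fibrant objects. The decisive input is now the general principle for left Bousfield localizations (see e.g.\ Hirschhorn, Proposition~3.3.16 in \emph{Model Categories and Their Localizations}): a weak equivalence in the localized structure between objects that are fibrant in the localized structure is automatically a weak equivalence in the underlying model structure. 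Applied to $g$, this yields that $g$ is a projective weak equivalence. Since $\Fh \to \Fh'$ is a projective weak equivalence by construction, so is the composite $\Fh \to \Fh' \to \Fh_f$, which is the original map $\Fh \to \Fh_f$. Evaluating at any $X \in \Th$ produces the asserted sectionwise weak equivalence $\Fh(X) \to \Fh_f(X)$.

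No step in this outline presents a genuine obstacle: the argument is essentially the conjunction of the Dugger--Hollander--Isaksen descent criterion for local projective fibrancy with the standard Bousfield localization principle that local equivalences between local objects are already equivalences in the ambient model structure. The only care needed is to assemble the correct projective fibrant replacement of $\Fh$ that is simultaneously locally fibrant, so that the comparison map $g$ lands in the scope of that localization principle.
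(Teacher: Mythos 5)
Your argument is correct and matches the paper's own proof essentially step for step: fix a projective fibrant replacement $\Fh\to\Fh'$, note it is a (local) trivial cofibration, invoke the descent hypothesis plus \cite[Corollary 7.1]{dhi} to see $\Fh'$ is locally projective fibrant, lift against $\Fh_f$, and conclude by two-out-of-three together with the standard Bousfield localization fact that local equivalences between local objects are underlying weak equivalences. The only cosmetic difference is that you cite Hirschhorn for that last principle while the paper simply states it; the logical content is identical.
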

\begin{proof}
Let $g:\Fh \to \Fh'$ be a fibrant replacement in the projective model structure. Since $g$ is a projective acyclic cofibration, it is also a local projective acyclic cofibration. Hence in order to show that $g$ is a fibrant replacement in the local projective model structure, it suffices to show that $\Fh'$ is  fibrant in the local projective model structure. But by the assumptions, $\Fh'$ is projective fibrant and satisfies descent with respect to all hypercovers. By \cite[Corollary 7.1]{dhi}, this show that $\Fh'$ is local projective fibrant. 
Moreover, if $h: \Fh \to \Fh_f$ is any local projective fibrant replacement, then, since $g$ is a local acyclic cofibration, there is an induced map $j:\Fh' \to \Fh_f$ that makes the triangle
\[
\xymatrix{
\Fh \ar[r]^h \ar[d]_g & \Fh_f \\
\Fh' \ar@{.>}[ur]_j & }
\]
commute. By the two-out-of-three property for weak equivalences we know that $j$ is a local weak equivalence as well. But since the local projective model structure is the left Bousfield localization of the projective model structure, a local weak equivalence between local fibrant objects is a projective weak equivalence.  Hence $h$ is a projective weak equivalence too. 
\end{proof}

%In the final section we will need the following fact. 

\begin{prop}\label{cordescent}
Let $\Fh$ be a simplicial presheaf that satisfies descent with respect to all hypercovers and let $X$ be an object of $\Th$. Then, for every projective fibrant replacement $g:\Fh \to \Fh'$, the natural map 
\[
\Hom_{\hosPreTh}(X, \Fh) \to \pi_0(\Fh'(X))\]
is a bijection.  
\end{prop}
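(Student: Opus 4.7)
The plan is to reduce everything to Lemma \ref{descent} combined with the standard computation of morphisms in a model category as $\pi_0$ of a mapping space between a cofibrant source and fibrant target.

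First I would pick an explicit local projective fibrant replacement $h : \Fh \to \Fh_f$. The representable presheaf $X$ is projectively cofibrant (representables are retracts of the generating cofibrations in the projective model structure), and left Bousfield localization does not change the class of cofibrations, so $X$ remains cofibrant in the local projective model structure on $\sPreTh$. Consequently $\Hom_{\hosPreTh}(X,\Fh)$ may be computed as $\pi_0$ of the simplicial mapping space $\map(X, \Fh_f)$, and by the simplicial version of the Yoneda lemma this mapping space is canonically isomorphic to $\Fh_f(X)$. Hence $\Hom_{\hosPreTh}(X,\Fh) \cong \pi_0(\Fh_f(X))$.

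Next I would relate $\Fh_f(X)$ to $\Fh'(X)$ using Lemma \ref{descent}. Because $\Fh$ satisfies descent with respect to all hypercovers, Lemma \ref{descent} says the local projective fibrant replacement $h: \Fh \to \Fh_f$ is in fact a projective weak equivalence. In particular the map of simplicial sets $\Fh(X) \to \Fh_f(X)$ is a weak equivalence. On the other hand $g : \Fh \to \Fh'$ is a projective fibrant replacement, so $\Fh(X) \to \Fh'(X)$ is a projective weak equivalence by definition. Combining these two equivalences shows that $\Fh'(X)$ and $\Fh_f(X)$ are weakly equivalent via a zig-zag natural in $X$; taking $\pi_0$ yields a canonical bijection $\pi_0(\Fh'(X)) \cong \pi_0(\Fh_f(X))$.

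Chaining these two identifications gives the desired bijection $\Hom_{\hosPreTh}(X,\Fh) \cong \pi_0(\Fh'(X))$, and one checks along the way that the bijection is induced by $g$ so the statement is about the canonical natural map. The only potential pitfall is the comparison between $\Fh_f$ and $\Fh'$: a priori there need not be a direct map between them in either direction, but as in the proof of Lemma \ref{descent} one obtains such a comparison map by lifting along the acyclic cofibration part of a factorization; alternatively one can factor $g$ through $h$ by the lifting axioms since $g$ is an acyclic cofibration for the projective structure (hence also for the local one) and $\Fh_f$ is local projective fibrant. This is the only step requiring any care, and it is handled by exactly the same diagrammatic argument used at the end of Lemma \ref{descent}.
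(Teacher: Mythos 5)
Your proof is correct and rests on the same pillars as the paper's: Lemma~\ref{descent}, cofibrancy of the representable $X$ in the (local) projective model structure, and the simplicial Yoneda identification $\map(X,\Gh)\cong \Gh(X)$. The only real difference is organizational: you introduce an auxiliary local projective fibrant replacement $h:\Fh\to\Fh_f$, compute $\Hom_{\hosPreTh}(X,\Fh)\cong\pi_0(\Fh_f(X))$, and then transfer to $\Fh'$ via the zig-zag of projective weak equivalences $\Fh'(X)\leftarrow\Fh(X)\to\Fh_f(X)$. The paper avoids the auxiliary replacement by observing (this is established inside the proof of Lemma~\ref{descent}) that $g:\Fh\to\Fh'$ is \emph{itself} already a local projective fibrant replacement, and then reads off $\Hom_{\hosPreTh}(X,\Fh)\cong\pi(X,\Fh')\cong\pi_0(\Fh'(X))$ in one step. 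You in fact flag this shortcut at the end of your write-up (factoring $g$ through $h$ by a lifting argument); the paper simply takes that as the main line of reasoning, which also makes the naturality of the bijection transparent rather than something to be ``checked along the way.''
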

\begin{proof}
By Lemma \ref{descent}, the map $g: \Fh \to \Fh'$ is a local projective fibrant replacement in $\sPreTh$. Hence, since the representable presheaf $X$ is local projective cofibrant, we can compute the morphisms from $X$ to $\Fh$ in $\hosPreTh$ via the isomorphism 
\[
\Hom_{\hosPresm}(X, \Fh) \cong \pi(X,\Fh')
\]
where $\pi(X,\Fh')$ denotes the set of simplicial homotopy classes of maps. Since $X$ is representable, this set is just given by the set of connected components of the simplicial set $\Fh'(X)$, i.e. we have $\pi(X,\Fh') \cong \pi_0(\Fh'(X))$. 
\end{proof}

%%%%%%%%%%%%%%%%%%%%%%%%%%%%%%%%%%%%%%%%%%%

\subsection{Generalized base of a topology and local model structures}\label{basissection}

Let $\Vh$ be an essentially small site. We recall Beilinson's generalization of the notion of a base for $\Vh$ from \cite[\S 2.1]{beilinson2}. 

\begin{defn}\label{basis}
A {\it base for $\Vh$} is a pair $(\Bh, u)$, where $\Bh$ is an essentially small category and $u:\Bh \to \Vh$ is a faithful functor that satisfies the following property: 

(*) For any $V\in \Vh$ and a finite family of pairs $(B_{\alpha}, f_{\alpha})$, $B_{\alpha} \in \Bh$, $f_{\alpha}: V \to u(B_{\alpha})$, there exists a set of objects $B'_{\beta}\in \Bh$ and a covering family $\{u(B'_{\beta}) \to V\}$ such that every composition $u(B'_{\beta}) \to V \to u(B_{\alpha})$ lies in 
\[
\Hom_{\Bh}(B'_{\beta},B_{\alpha})\subset \Hom_{\Vh}(u(B'_{\beta}), u(B_{\alpha})).
\] 
\end{defn}

Our main example for this situation is the forgetful functor $u$ from the category of smooth compactifications $\Bh=\Smb$ to the site $\Vh=\Sm$ of smooth varieties over a field of characteristic zero. We will discuss this example in detail in the next section.

\begin{defn}\label{coveringsieves}
Let $(\Bh, u)$ be a base for $\Vh$. A {\it covering sieve} in $\Bh$ is defined to be a sieve whose image under $u$ is a covering family in $\Vh$. 
\end{defn}

By \cite[Proposition 2.1]{beilinson2}, covering sieves in $\Bh$ form a Grothendieck topology on $\Bh$ and the functor $u$ is continuous. Moreover, $u$ induces an equivalence of toposes. 

Our goal is to transfer these results to a comparison of homotopy categories. Let $\sPreBh$ and $\sPreVh$ denote the categories of simplicial presheaves on $\Bh$ and $\Vh$ respectively. 
The functor $u$ induces a pair of adjoint functors 
\[
u^{\ast}:\sPreBh \leftrightarrow \sPreVh:u_{\ast}. 
\]
The right adjoint $u_*$ is given by composition with $u$, i.e. 
\[
u_*\Fh(B) = \Fh(u(B)).
\]
The left adjoint $u^*$ is defined by sending a simplicial presheaf $\Gh$ on $\Bh$ to the simplicial presheaf on $\Vh$
\[
V \mapsto u^*\Gh(V) := \colim_{(B,f)\in C(V)} \Gh(B)
\]
where the colimit is taken over the category $C(V)$ of pairs $(B,f)$, $B\in \Bh$, $f:V\to u(B)$, with $\Hom_{C(V)}((B,f),(B',f')):= \{g\in \Hom(B',B):u(g)\circ f'=f\}$, and we set $\Gh(B,f):=\Gh(B)$.

Since the functor $u: \Bh \to \Vh$ is continuous, the left adjoint $u^*$ preserves generalized covers of presheaves. Nevertheless, $u^*$ does not in general preserve hypercovers of simplicial presheaves. But if we assume that $u^*$ sends hypercovers in $\sPreBh$ to hypercovers in $\sPreVh$, then we get the following homotopy analogue of \cite[Theorem 4.1]{verdier2} and \cite[Proposition 2.1]{beilinson2}.

\begin{theorem}\label{modelthm}
Let $\Vh$ be an essentially small site and $(\Bh, u)$ be a base for $\Vh$. We assume that $u^*$ preserves finite limits. Then the pair $(u^*,u_*)$ is a Quillen equivalence.  
Hence we obtain adjoint derived equivalences 
\[
Lu^{\ast}:\hosPreBh \stackrel{\sim}{\leftrightarrow} \hosPreVh:Ru_{\ast}. 
\]
\end{theorem}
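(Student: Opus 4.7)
My plan is to verify the Quillen equivalence in three steps. First, I would observe that $(u^*,u_*)$ is automatically a Quillen pair for the projective model structures on $\sPreBh$ and $\sPreVh$: projective fibrations and projective weak equivalences are defined objectwise and $u_*\Fh(B)=\Fh(u(B))$, so $u_*$ trivially preserves both. Second, I would promote this to a Quillen pair for the local projective structures. Since these are left Bousfield localizations at the sets of hypercover augmentations $\hocolim\Uh_\bullet\to X$, by the universal property of Bousfield localization it suffices to check that $u^*$ sends such maps in $\sPreBh$ to local weak equivalences in $\sPreVh$. Given a hypercover $\Uh_\bullet \to X$ in $\sPreBh$, the functor $u^*$ preserves homotopy colimits (being projective left Quillen), and by the hypothesis that it preserves finite limits together with the fact that it sends coproducts of representables to coproducts of representables, the image $u^*\Uh_\bullet \to u(X)$ is again a hypercover. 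Hence its augmentation $\hocolim u^*\Uh_\bullet \to u(X)$ is a local weak equivalence.

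To upgrade to a Quillen equivalence, I would first show that $u_*$ preserves local projective fibrant objects. If $\Fh\in\sPreVh$ is local fibrant, then it satisfies descent for every hypercover in $\sPreVh$; in particular, for hypercovers of the form $u^*\Uh_\bullet \to u(B)$ produced in the previous step. Using the level-wise identity $u_*\Fh(\Uh_n)=\Fh(u^*\Uh_n)$ (valid because $u^*$ preserves coproducts and sends representables to representables), this translates into descent of $u_*\Fh$ along $\Uh_\bullet\to B$. Hence $Ru_*\simeq u_*$ on local fibrant inputs. It then suffices to check that the derived unit $B \to Ru_*Lu^*B = u_*(u(B)_f)$ is a local weak equivalence for every representable $B\in\Bh$, where $u(B)_f$ denotes a local projective fibrant replacement of $u(B)$ in $\sPreVh$; the case of general cofibrant objects follows by standard cellular arguments. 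Applying Proposition~\ref{cordescent} on both sites yields, for any representable $B'\in\Bh$,
\[
\Hom_{\hosPreBh}(B',u_*(u(B)_f)) = \pi_0(u(B)_f(u(B'))) = \Hom_{\hosPreVh}(u(B'),u(B)),
\]
and I would identify the right-hand side with $\Hom_{\hosPreBh}(B',B)$ via Beilinson's equivalence of sheaf topoi \cite[Proposition 2.1]{beilinson2}.

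The main obstacle is this last identification. Beilinson's equivalence is stated at the level of ordinary sheaves of sets, whereas the mapping sets $\Hom_{\hosPreBh}(B',B)$ and $\Hom_{\hosPreVh}(u(B'),u(B))$ a priori carry higher homotopical information coming from descent over hypercovers. The hypothesis that $u^*$ preserves finite limits, and hence hypercovers, is precisely what forces this higher information to match under $u_*$, so the identification becomes the homotopical refinement of Beilinson's argument. An alternative, perhaps cleaner approach would be to rephrase the entire proof in the language of $\infty$-topoi: a morphism of sites inducing an equivalence of underlying topoi and preserving hypercovers induces an equivalence of the associated hypercomplete $\infty$-topoi, and thus of their simplicial presentations. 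Either way, the essential input beyond Beilinson's original result is the hypercover-preservation property of $u^*$.
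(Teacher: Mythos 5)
Your first two steps match the paper: the projective Quillen adjunction is immediate, and hypercover-preservation by $u^*$ (which the paper deduces from finite-limit preservation) promotes it to the local projective structures via the universal property of Bousfield localization. Where you diverge is in the Quillen-equivalence step, and this is where there is a genuine gap.

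The paper's argument at this stage is short and avoids everything you set up: Beilinson's \cite[Proposition 2.1]{beilinson2} asserts that the unit $a_{\Fh}\colon \Fh\to u_*u^*\Fh$ and counit $b_{\Gh}\colon u^*u_*\Gh\to\Gh$ become \emph{isomorphisms} after sheafification, for \emph{every} presheaf. Since the map from a (simplicial) presheaf to its sheafification is always a local weak equivalence, two-out-of-three forces $a_{\Fh}$ and $b_{\Gh}$ themselves to be local weak equivalences, uniformly in $\Fh$ and $\Gh$. This yields the Quillen equivalence with no need to restrict to representables or fibrant replacements. By contrast, your route tries to check the derived unit on representables $B$ and then ``extend by standard cellular arguments'' — but $Ru_*$ is a right derived functor and there is no reason for it to commute with the homotopy colimits occurring in a cellular induction, so that extension step is not justified. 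Worse, your computation reduces to the identification $\Hom_{\hosPreBh}(B',B)\cong\Hom_{\hosPreVh}(u(B'),u(B))$, and as you yourself note this is not a formal consequence of Beilinson's $1$-topos equivalence: a representable presheaf is generally not local-projective fibrant, so these $\Hom$-sets already encode higher descent data. You have correctly located the obstacle but not closed it; in effect, the identification you want is a special case of the theorem you are proving. The missing idea that the paper supplies is to work with the unit and counit directly at the presheaf level and use sheafification to detect local weak equivalences, rather than attempting a generators-and-relations reduction to representables. Your closing remark about hypercomplete $\infty$-topoi points at a legitimate alternative proof, but as sketched it is a reformulation of the goal rather than an argument.
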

\begin{proof}
The right adjoint $u_*$ preserves projective weak equivalences and projective fibrations. Hence $(u^*,u_*)$ is a Quillen pair of adjoint functors on the projective model structure. 
%In particular, the left adjoint $u^*$ preserves cofibrations and weak equivalences between cofibrant objects. Since cofibrations do not change under left Bousfield localization, we know that $u^*$ preserves cofibrations and $u_*$ preserves trivial fibrations on the local model structures. 
Since $u^*$ preserves hypercovers, it is a Quillen left adjoint on the local projective model structure as well. 
It remains to show that $(u^*,u_*)$ is a pair of Quillen equivalences. This follows from Beilinson's argument. Let $a_{\Fh}:\Fh \to u_*u^*\Fh$, for $\Fh \in \sPreBh$, and $b_{\Gh}: u^*u_*\Gh \to \Gh$, for  $\Gh \in \sPreVh$, denote the adjunction maps. 
By \cite[Proposition 2.1]{beilinson2}, the sheafifications $\widetilde{a_{\Fh}}$ and $\widetilde{b_{\Gh}}$ of these maps in the topologies of $\Bh$ and $\Vh$ respectively are isomorphisms. In particular, $\widetilde{a_{\Fh}}$ and $\widetilde{b_{\Gh}}$ are local weak equivalences. Since the map from a simplicial presheaf to its sheafification is a local weak equivalence, the two-out-of-three property for local weak equivalences shows that $a_{\Fh}$ and $b_{\Gh}$ are local weak equivalences. Hence $u^*$ and $u_*$ are Quillen equivalences. 
\end{proof}

\begin{remark}
In the statement of Theorem \ref{modelthm} we required that $u^*$ preserves finite limits. But the proof only uses that $u^*$ preserves hypercovers. Since we do not know of any example of interest where $u^*$ preserves hypercovers but not finite limits, the theorem is formulated with the assumption that $u^*$ preserves finite limits. 
\end{remark}

%%%%%%%%%%%%%%%%%%%%%%%%%%%%%%%%%%%%%%%%%%%%
%%%%%%%%%%%%%%%%%%%%%%%%%%%%%%%%%%%%%%%%%%%%
%%%%%%%%%%%%%%%%%%%%%%%%%%%%%%%%%%%%%%%%%%%%

%
\section{Smooth compactifications}

\subsection{The forgetful functor}

We now apply the arguments from the preceding section to our main example. 
Let $k$ be a field of characteristic zero. We will use the term {\em variety} for a separated scheme of finite type over $k$. Let $\Smknis=\Sm$ be the site of smooth varieties over $k$ with the Nisnevich topology.

Let $\Smb$ be the category whose objects are {\em smooth
compactifications}, i.e. pairs $(X,\oX)=(X\subset \oX)$ consisting of a
smooth variety $X$ embedded as an open subset of a projective variety
$\oX$ and having the property that $\oX-X$ is a normal crossing divisor which is the union of smooth divisors.  A map from $(X, \oX)$ to $(Y,\oY)$ is a commutative diagram 
\[
\xymatrix{
X  \ar[r]\ar[d]  &  \oX \ar[d] \\
Y  \ar[r]        &\oY.}
\]

By Hironaka's theorem \cite{hironaka}, every smooth variety over $k$ admits a smooth compactification. Moreover, for a given smooth variety $X$, the category $C(X)$ of all smooth compactifications of $X$ is filtered. For if $\oX_1$ and $\oX_2$ are two smooth compactifications of $X$, we define $\oX$ to be a resolution of singularities of the closure of the image of the diagonal of $X$ in $\oX_1 \times \oX_2$. Then $\oX$ is a smooth compactification of $X$ with morphisms 
\[
\xymatrix{
\oX \ar[r] \ar[dr] & \oX_1 \\
 & \oX_2}
\]
which induce the identity on $X$. Similarly, if we are given two morphisms 
\[
j_1, j_2: \oX' \rightrightarrows \oX'' 
\]
of smooth compactifications of $X$, then, by taking a resolution of singularities of the closure of the diagonal as before, we can construct a third smooth compactification $\oX$ of $X$ such that there is a commutative diagram in $\Smb$ 
\[
\xymatrix{
\oX \ar[d] \ar[r] & \oX' \ar[d] \\
\oX' \ar[r] & \oX''.}
\]

The forgetful functor 
\begin{align*}
u:\Smb & \to \Sm \\
(X, \oX) &\mapsto X
\end{align*}
induces as above a pair of adjoint functors on the categories of simplicial presheaves  
\[
u^{\ast}:\sPresmb \leftrightarrow \sPresm:u_{\ast}. 
\]
The left adjoint $u^*$ is given by sending a simplicial presheaf $\Fh$ on $\Smb$ to the simplicial presheaf 
\[
X \mapsto u^*\Fh(X) = \colim_{C(X)} \Fh(\oX).
\]
%where the colimit is taken over the directed category $C(X)$ of all smooth compactifications of $X$. 

The following proposition can be proved as in \cite[\S 2.5]{beilinson2}. %We provide the details of the short proof for the convenience of the reader. 
\begin{prop}\label{baseprop}
The pair $(\Smb,u)$ is a base for the site $\Sm$ with the Nisnevich topology.
\end{prop}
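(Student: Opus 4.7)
The plan is to verify the two requirements of Definition \ref{basis}: that the forgetful functor $u$ is faithful, and that the covering condition (*) holds for the Nisnevich topology on $\Sm$. Faithfulness is immediate: a morphism $(X, \oX) \to (Y, \oY)$ in $\Smb$ is a commutative square, and the map $\oX \to \oY$ is already determined by its restriction $X \to Y$, because $\oY$ is separated and $X$ is schematically dense in $\oX$, so any two extensions must agree.

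For condition (*), I would reduce it to the following geometric statement: given $V \in \Sm$ and a finite family of morphisms $f_\alpha : V \to X_\alpha$ with $(X_\alpha, \oX_\alpha) \in \Smb$, there exists a single smooth compactification $(V, \oV) \in \Smb$ such that each $f_\alpha$ extends to a morphism of pairs $(V, \oV) \to (X_\alpha, \oX_\alpha)$ in $\Smb$. Granting this, the trivial family $\{\id_V : V \to V\}$ is a Nisnevich cover, and taking the single lift $B' = (V, \oV)$ verifies (*) directly.

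To construct $\oV$, I would first apply Hironaka's theorem to choose any initial smooth compactification $V \subset \oV^0$. The maps $f_\alpha$ assemble into a morphism $V \to \prod_\alpha \oX_\alpha$ and hence into a rational map $\oV^0 \dashrightarrow \prod_\alpha \oX_\alpha$ defined on $V$. By the strong form of Hironaka's theorem, combining elimination of indeterminacies with embedded resolution of the boundary, there is a birational morphism $\pi : \oV \to \oV^0$, obtained as a composition of blow-ups along smooth centers supported in $\oV^0 - V$, such that $\oV$ is smooth projective, the rational map extends to a morphism $\tf : \oV \to \prod_\alpha \oX_\alpha$, and $\oV - V = \oV - \pi^{-1}(V)$ is a simple normal crossing divisor with smooth components. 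Composing $\tf$ with each projection yields the desired extension $\oV \to \oX_\alpha$, which restricts to $f_\alpha : V \to X_\alpha$ on $V$ and so defines a morphism $(V, \oV) \to (X_\alpha, \oX_\alpha)$ in $\Smb$.

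The main obstacle will be this last step: simultaneously resolving the indeterminacies of several rational maps while keeping the boundary divisor a simple normal crossing divisor with smooth components. This is precisely the strong form of Hironaka's resolution theorem in characteristic zero, and it carries all the geometric content of the proposition; the rest is a formal verification of Definition \ref{basis}.
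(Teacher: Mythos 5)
Your proof is correct and follows essentially the same route as the paper: embed $V$ in the product of an initial compactification with $\prod_\alpha \oX_\alpha$, take the closure, and apply Hironaka's resolution to obtain a single smooth compactification of $V$ through which every $f_\alpha$ extends, so that the identity cover of $V$ witnesses condition (*). You additionally spell out the faithfulness of $u$ (via separatedness of $\oY$ and density of $X$ in $\oX$), which the paper leaves implicit.
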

\begin{proof}
Let $X \in \Sm$ and $\{(U_{\alpha},\oU_{\alpha})\}$ be a finite collection of pairs in $\Smb$ with maps $f_{\alpha}:X \to U_{\alpha}$ in $\Sm$. To show that $(\Smb,u)$ is a base for $\Sm$, it suffices to embed $X$ into a pair $(X,\oX)$ such that the maps $f_{\alpha}$ extend to maps $(X,\oX) \to (U_{\alpha},\oU_{\alpha})$. Therefore, let $\oX'$ be a smooth compactification of $X$. Then let $\oX$ be a smooth compactification of the closure of the image of $X$ in $\oX'\times \prod_{\alpha}\oU_{\alpha}$. The pair $(X,\oX)$ is equipped with induced maps $(X,\oX) \to (U_{\alpha},\oU_{\alpha})$ which extend the $f_{\alpha}$. 
\end{proof}

\begin{lemma}\label{f*hyper}
The left adjoint $u^*: \sPresmb \to \sPresm$ preserves finite limits. In particular, $u^*$ preserves hypercovers. 
\end{lemma}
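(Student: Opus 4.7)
The strategy is to exploit the filteredness of the category $C(X)$ of smooth compactifications of $X$, established just before this lemma, together with the classical fact that filtered colimits of simplicial sets commute with finite limits.

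For the first assertion, limits in both $\sPresmb$ and $\sPresm$ are computed objectwise and levelwise. Given a finite diagram $\{\Fh_i\}_{i\in I}$ in $\sPresmb$ and $X\in\Sm$, the explicit formula for $u^*$ gives
\[
u^*\bigl(\lim_i \Fh_i\bigr)(X)=\colim_{C(X)}\lim_i\Fh_i(\oX)
\quad\text{and}\quad
\bigl(\lim_i u^*\Fh_i\bigr)(X)=\lim_i\colim_{C(X)}\Fh_i(\oX),
\]
and the canonical comparison is an isomorphism since $C(X)$ is filtered.

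For the second assertion, I would first recall that continuity of $u:\Smb\to\Sm$, which is part of the content of Proposition~\ref{baseprop}, implies that $u^*$ preserves generalized covers of presheaves. Next, I would verify that $u^*$ carries the representable $h_{(Y,\oY)}$ to $h_Y$: the colimit
\[
u^*h_{(Y,\oY)}(X)=\colim_{C(X)}\Hom_{\Smb}\bigl((X,\oX),(Y,\oY)\bigr)
\]
is canonically identified with $\Hom_{\Sm}(X,Y)$ by the extension-by-enlargement argument used in the proof of Proposition~\ref{baseprop}. As a left adjoint, $u^*$ therefore sends coproducts of representables (in each simplicial degree) to coproducts of representables. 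Finally, a hypercover $\Uh\to X$ is characterized by the condition that each $\Uh_n$ is a coproduct of representables and that each matching map $\Uh_n\to(\cosk_{n-1}\Uh)_n$ is a generalized cover; since the coskeleton in degree $n$ is built from finitely many fiber products, and $u^*$ preserves both finite limits and generalized covers, the image $u^*\Uh\to u^*X$ remains a hypercover.

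The only point requiring some care is the compatibility of the simplified formula $u^*\Fh(X)=\colim_{C(X)}\Fh(\oX)$ used here with the more general definition $\colim_{(B,f)\in C(V)}\Gh(B)$ of Section~\ref{basissection}; but the natural functor sending $(X,\oX)$ to $((X,\oX),\id_X)$ is cofinal between the two indexing categories, essentially by Proposition~\ref{baseprop}, so the two colimits agree and the filteredness argument applies verbatim.
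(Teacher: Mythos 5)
Your proof of the first assertion is the same as the paper's: both reduce to the objectwise computation and then invoke commutation of finite limits with the filtered colimit over $C(X)$. Where you go beyond the paper is in spelling out why this yields preservation of hypercovers: the paper states this "in particular" without argument, whereas you correctly assemble it from continuity (so $u^*$ preserves generalized covers), the fact that the left Kan extension sends $h_{(Y,\oY)}$ to $h_Y$ and hence coproducts of representables to coproducts of representables, and the characterization of hypercovers via the matching maps to coskeleta, which are finite limits. You also flag and resolve, via a cofinality observation, the mismatch between the general formula $u^*\Gh(V)=\colim_{(B,f)\in C(V)}\Gh(B)$ of Section~\ref{basissection} and the simplified $\colim_{C(X)}\Fh(\oX)$ used in the proof; the paper silently conflates these, so this is a worthwhile addition rather than a deviation.
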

\begin{proof}
Let $\lim_{i\in I}\Fh_i$ be the limit of a finite diagram in $\sPresmb$. For a variety $X\in \Sm$, we have
\[
u^*(\lim_i\Fh_i)(X) = \colim_{C(X)}((\lim_i\Fh_i)(\oX)).
\]
Since the limit of simplicial presheaves can be calculated objectwise, and since the finite limit commutes with the filtered colimit over $C(X)$, we get
\[
u^*(\lim_i\Fh_i)(X) \cong \lim_i (\colim_{C(X)}\Fh_i(\oX)) = \lim_i u^*\Fh_i(\oX).
\]
Hence obtain an isomorphism of simplicial presheaves 
\[
u^*(\lim_i\Fh_i) \cong \lim_i (u^*\Fh_i).
\]
\end{proof}

As a consequence of Theorem \ref{modelthm} and Lemma \ref{f*hyper} we get the following result.

\begin{cor}
The pair 
\[
u^{\ast}:\sPresmb \leftrightarrow \sPresm:u_{\ast} 
\]
is a Quillen equivalence. Hence we obtain adjoint derived equivalences 
\[
Lu^{\ast}:\hosPresmb \stackrel{\sim}{\leftrightarrow} \hosPresm:Ru_{\ast}. 
\]
\end{cor}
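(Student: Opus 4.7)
The plan is to verify the two hypotheses of Theorem \ref{modelthm} for the forgetful functor $u:\Smb\to\Sm$ and then read off the conclusion; once the theorem is available, the corollary is essentially immediate.

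The first hypothesis is that $(\Smb,u)$ is a base for $\Sm$ equipped with the Nisnevich topology, in the sense of Definition \ref{basis}. This is the content of Proposition \ref{baseprop}: given any $X\in\Sm$ and any finite family of maps $f_\alpha:X\to U_\alpha$ with $(U_\alpha,\oU_\alpha)\in\Smb$, one embeds $X$ into a common smooth compactification $\oX$ obtained as a resolution of the closure of the image of $X$ in $\oX'\times\prod_\alpha\oU_\alpha$, and the $f_\alpha$ then extend to morphisms in $\Smb$. The availability of such $\oX$ rests on Hironaka's theorem and on the fact that the category $C(X)$ of smooth compactifications of $X$ is filtered, both recorded just before Proposition \ref{baseprop}. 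The second hypothesis is that $u^*$ preserves finite limits, which is Lemma \ref{f*hyper}. Its proof is formal: $u^*\Fh(X)$ is the filtered colimit $\colim_{C(X)}\Fh(\oX)$, limits of simplicial presheaves are computed objectwise, and finite limits commute with filtered colimits in $\sS$.

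With both hypotheses in hand, Theorem \ref{modelthm} applies verbatim and yields the Quillen equivalence $u^{\ast}:\sPresmb \leftrightarrow \sPresm:u_{\ast}$ between the local projective model structures, together with the induced derived equivalence $Lu^{\ast}:\hosPresmb \leftrightarrow \hosPresm:Ru_{\ast}$ on homotopy categories. I do not expect any real obstacle at this stage, since the substantive content has already been assembled in Theorem \ref{modelthm}, Proposition \ref{baseprop}, and Lemma \ref{f*hyper}; the only thing to verify in passing is that the preservation of finite limits really does feed Lemma \ref{f*hyper} into the hypothesis of Theorem \ref{modelthm} (which itself was what allowed the latter to be phrased with that convenient hypothesis in place of the a priori weaker assumption of preserving hypercovers).
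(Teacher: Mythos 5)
Your proposal is correct and follows exactly the paper's own route: cite Proposition \ref{baseprop} to establish that $(\Smb,u)$ is a base for $\Sm$, cite Lemma \ref{f*hyper} for preservation of finite limits, and then apply Theorem \ref{modelthm}. The paper states the corollary as an immediate consequence of these results with no further argument, so your reconstruction matches it precisely.
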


\begin{remark}
The results of this section do not depend on the fact that we are working in the Nisnevich topology. In \cite{beilinson2}, Beilinson uses the $h$-topology generated by covering families which are universal topological epimorphisms. Our choice of the Nisnevich topology here is due to the situation of the next section which is related to the homotopy theoretical approach to generalized Deligne-Beilinson cohomology theories and its connections to motivic homotopy theory in \cite{hfcbordism}.
\end{remark}

%%%%%%%%%%%%%%%%%%%%%%%%%%%%%%%%%%%%%%%%%%

\subsection{Logarithmic differential forms}\label{Omegalog}

Let $k$ be the complex numbers $\C$. 
For a smooth complex variety $X$, let $\oX$ be a smooth compactification of $X$ and let $D:=\oX - X$ denote the complement of $X$. The divisor $D$ is a normal crossing divisor which is the union of smooth divisors. Let $\Omega^1_{\oX}\langle D \rangle$ be the locally free sub-module of $j_*\Omega^1_{X}$ generated by $\Omega^1_{X}$ and by $\frac{dz_i}{z_i}$ where $z_i$ is a local equation for an irreducible local component of $D$. The sheaf $\Omega^p_{\oX}\langle D \rangle$ of meromorphic $p$-forms on $\oX$ with at most logarithmic poles along $D$ is defined to be the locally free sub-sheaf $\bigwedge^p\Omega^1_{\oX}\langle D \rangle$ of $j_*\Omega_{\oX}^p$. 
% The Hodge filtration $F$ on the complex of sheaves $\Omega^*_{\oX}\langle D \rangle$ is the naive filtration 
%\[
%F^p\Omega^*_{\oX}\langle D \rangle = \Omega^{* \geq p}_{\oX}\langle D \rangle.
%\]
The Hodge filtration on the complex cohomology of $X$ can be defined as the image 
\begin{equation}\label{Hodgefiltration} 
F^pH^n(X;\C):= \Imm (H^n(\oX; \Omega^{* \geq p}_{\oX}\langle D \rangle) \to H^n(X;\C)).
\end{equation}

It is a theorem of Deligne's \cite[Th\'eor\`eme 3.2.5 and Corollaire 3.2.13]{hodge2} that for smooth complex algebraic varieties, the homomorphism 
\[
H^n(\oX; \Omega^{* \geq p}_{\oX}\langle D \rangle) \to H^n(\oX; \Omega^{*}_{\oX}\langle D \rangle)
\]
is injective and the image is independent of the choice of $\oX$. In particular, the map in \eqref{Hodgefiltration} induces an isomorphism
\begin{equation}\label{delignehodge}
H^n(\oX; \Omega^{* \geq p}_{\oX}\langle D \rangle) \cong F^pH^n(X;\C) \subset H^n(X;\C).
\end{equation}

%It is a theorem of Deligne's that for smooth complex varieties, the Hodge filtration is actually detected by the filtration on the complex of logarithmic forms. More precisely, Deligne showed in \cite[Corollaire 3.2.13]{hodge2} that the logarithmic Hodge spectral sequence degenerates at the  $E_1$-level and that the map \eqref{Hodgefiltration} is an isomorphism
%\begin{equation}\label{delignehodge}
%F^pH^n(X;\C) \cong \HH^n(\oX; \Omega^{* \geq p}_{\oX}\langle D \rangle),
%\end{equation}
%
%or in other words, the homomorphism 
%\[\HH^n(\oX; \Omega^{* \geq p}_{\oX}\langle D \rangle) \to \HH^n(\oX; \Omega^{*}_{\oX}\langle D \rangle)\]
%of hypercohomology groups is injective. 

We denote by $\oOmega^*$ the presheaf of differential graded $\C$-algebras on $\Smb$ that sends a pair $X \subset \oX$ with $D:=\oX - X$ to $\Omega_{\oX}^* \langle D \rangle (\oX)$. For any given integer $p\geq 0$, we denote by $\oOmega^{*\geq p}$ the presheaf on $\Smb$ that sends a pair $X\subset \oX$ to $\Omega_{\oX}^{*\geq p} \langle D \rangle (\oX)$.

Let 
\[
\Omega^{\ast\ge p}_{\oX}\langle D \rangle \to \Ah^{*\geq p}_{\oX}\langle D \rangle
\]
be any resolution by cohomologically trivial sheaves which is
functorial in $X\subset \oX$ and which induces a commutative diagram 
\[
\xymatrix{
\Omega^{\ast\ge p}_{\oX}\langle D \rangle (\oX) \ar[r] \ar[d] & \Omega^*_X(X) \ar[d] \\
\Ah^{*\geq p}_{\oX}\langle D \rangle (\oX) \ar[r] & \Ah^*_X(X).}
\]
For example, $\Ah^{*\geq p}_{\oX}\langle D \rangle$ could be the Godemont resolution (\cite[\S 3.2.3]{hodge2}). The reader should note that $\Ah^{*\geq p}_{\oX}\langle D \rangle$ are double complexes, though we will only consider their total complexes. 

We denote the presheaf of complexes on $\Smb$ that sends a pair $(X,\oX)$ to $\Ah^{*\geq p}_{\oX}\langle D \rangle (\oX)$ by $\oAh^{*\geq p}$, and let 
\[
\oOmega^{\ast\ge p}  \to \oAh^{*\geq p} 
\]
be the associated map of complexes of presheaves on $\Smb$.    % Moreover, these complexes may be chosen in such a way as to fit into a commutative diagram 
%\[
%\xymatrix{
%\oOmega^{*\geq p-1} \ar[r]\ar[d]  & \oOmega^{*\geq p} \ar[d] \\
%\oA^{*\geq p-1}  \ar[r]   & \oA^{*\geq p}.}
%\]

Our final goal is to show that the functor $X\mapsto F^pH^n(X;\C)$ is representable in $\hosPresm$. Therefore, we recall the Dold-Kan correspondence on the level of presheaves which connects presheaves of complexes to simplicial presheaves.

For a  chain complex of presheaves of abelian groups $\Ch_*$ on $\Sm$, we denote by $H_i(\Ch^*)$ the presheaf $U\mapsto H_i(\Ch_*(U))$. For a cochain complex $\Ch^*$ we will denote by $\Ch_*$ its associated chain complex given by $\Ch_n:=\Ch^{-n}$. For any given $n$, we denote by $\Ch^*[n]$ the cochain complex given in degree $q$ by $\Ch^q[n]:=\Ch^{q-n}$. 

Applying the normalized chain complex functor objectwise, we obtain a functor $\Gh \mapsto N(\Gh)$ from simplicial sheaves of abelian groups to chain complexes of sheaves of abelian groups. Then we have $\pi_i(\Gh)\cong H_i(N(\Gh))$. The functor has a right adjoint $\Kh$ again obtained by applying the corresponding functor for chain complexes objectwise. %By \cite{brown} and \cite[Proposition 2.1.24]{mv}, the pair $(N,\Kh)$ is a pair of mutually inverse equivalences between the category of complexes of sheaves of abelian groups $\Ch_*$ with $\Ch_i=0$ for $i<0$ and the category of simplicial sheaves of abelian groups.
For a presheaf of chain complexes $\Ch^*$ on $\Sm$ and an integer $n$, we denote by $\Kh(\Ch^*,n) := \Kh(\Ch^*[n])$ the Eilenberg-MacLane simplicial presheaf corresponding to $\Ch^*[n]$ under the Dold-Kan correspondence. 

The following result is an important ingredient in the homotopy theoretical approach to Deligne-Beilinson cohomology in \cite{hfcbordism}. 

\begin{theorem}
\label{compindpairs}
For every smooth complex variety $X$, there is a natural isomorphism 
\[
\Hom_{\hosPresm}(X, u^*\Kh(\oAh^{*\geq p}, n)) \cong F^pH^n(X; \C).
\]
\end{theorem}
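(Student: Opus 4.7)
The plan is to apply Proposition \ref{cordescent} to the simplicial presheaf $\Fh := u^*\Kh(\oAh^{*\geq p}, n)$ on $\Sm$, which reduces the computation of $\Hom_{\hosPresm}(X, \Fh)$ to $\pi_0\Fh(X)$, and then to identify $\pi_0\Fh(X)$ with $F^pH^n(X;\C)$ via Deligne's theorem \eqref{delignehodge}. For this strategy to go through one needs: (i) $\Fh$ is projective fibrant; (ii) $\Fh$ satisfies descent with respect to all Nisnevich hypercovers on $\Sm$; and (iii) $\pi_0\Fh(X) \cong F^pH^n(X;\C)$.

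For (i) and (iii), I would use the explicit formula for $u^*$: the category $C(X)$ of pairs over $X$ has as a final subcategory the filtered category of smooth compactifications of $X$ (cofinality being immediate from the argument of Proposition \ref{baseprop}, and filteredness being the statement noted at the start of Section 3). Hence
\[
\Fh(X) \;=\; \colim_{\oX \in C(X)} \Kh\bigl(\Ah^{*\geq p}_{\oX}\langle D\rangle(\oX),\, n\bigr).
\]
A filtered colimit of simplicial abelian groups is a simplicial abelian group, hence a Kan complex, giving (i). For (iii), since $\pi_0$ commutes with filtered colimits and $\pi_0\Kh(\Ch^*,n) \cong H^n(\Ch^*)$,
\[
\pi_0\Fh(X) \;\cong\; \colim_{\oX} H^n\bigl(\Ah^{*\geq p}_{\oX}\langle D\rangle(\oX)\bigr) \;\cong\; \colim_{\oX} H^n\bigl(\oX;\, \Omega^{*\geq p}_{\oX}\langle D\rangle\bigr) \;\cong\; F^pH^n(X;\C).
\]
The middle isomorphism uses that $\Ah^{*\geq p}_{\oX}\langle D\rangle$ is a cohomologically trivial resolution of $\Omega^{*\geq p}_{\oX}\langle D\rangle$, so global sections compute hypercohomology; the last isomorphism is Deligne's theorem \eqref{delignehodge}, which additionally guarantees that the transition maps in the filtered colimit system are isomorphisms, so the colimit is effectively constant.

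The main obstacle is (ii), Nisnevich hyperdescent for $\Fh$. I would reduce this via Dold--Kan to hyperdescent for the presheaf of cochain complexes $X \mapsto \colim_{\oX}\Ah^{*\geq p}_{\oX}\langle D\rangle(\oX)$ on $\Sm$. Given a Nisnevich hypercover $\Uh \to X$, Hironaka's theorem together with the filteredness of $C(-)$ used in Proposition \ref{baseprop} allows one, after refinement, to lift $\Uh$ to a compatible simplicial diagram of smooth compactifications equipped with maps to a fixed compactification $\oX$ of $X$. Under such a lift, the descent statement reduces to the fact that $\Omega^{*\geq p}_{\oX}\langle D\rangle$, resolved by cohomologically trivial sheaves on each compactification, computes Nisnevich hypercohomology of $\oX$, together with the observation that restriction to an open embedded subscheme commutes (up to quasi-isomorphism) with the global-section computation in the filtered colimit. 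Once (ii) is established, Proposition \ref{cordescent} combined with (i) and (iii) yields the desired natural isomorphism $\Hom_{\hosPresm}(X, u^*\Kh(\oAh^{*\geq p}, n)) \cong F^pH^n(X;\C)$.
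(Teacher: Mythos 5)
Your steps (i) and (iii) are sound and match the paper: the filtered-colimit description of $u^*$ over $C(X)$, the Dold--Kan identification of homotopy groups with cohomology of the log-de Rham complex, and Deligne's isomorphism \eqref{delignehodge} are all used in essentially the same way.

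The gap is in step (ii), the verification of Nisnevich hyperdescent. Your proposed reduction --- lifting a hypercover $\Uh \to X$ to a simplicial diagram of compactifications over a fixed $\oX$ and then appealing to the fact that a cohomologically trivial resolution of $\Omega^{*\geq p}_{\oX}\langle D\rangle$ computes hypercohomology of $\oX$ --- does not address the actual difficulty. A Nisnevich cover $\{U_\alpha \to X\}$ does not lift to a cover of the compactification $\oX$: each $\oU_\alpha$ has its own boundary divisor, the sheaves $\Omega^{*\geq p}_{\oU_\alpha}\langle D_{U_\alpha}\rangle$ are not restrictions of $\Omega^{*\geq p}_{\oX}\langle D\rangle$, and the maps $\oU_\alpha \to \oX$ are typically not open immersions. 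So descent for $\Fh$ on $\Sm$ cannot be reduced to a sheaf-cohomology statement on $\oX$. The claim that ``restriction to an open embedded subscheme commutes (up to quasi-isomorphism) with the global-section computation in the filtered colimit'' is exactly what is \emph{not} automatic here: the restriction map $F^pH^*(X;\C) \to F^pH^*(U;\C)$ is induced by a morphism of compactifications, and its compatibility with exactness is a nontrivial fact about mixed Hodge structures.

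The paper's route around this is qualitatively different and is the real content of the theorem. It first shows, using the computation $\pi_q(\Fh(X)) \cong F^pH^{n-q}(X;\C)$, that $\Fh$ takes an elementary distinguished Nisnevich square to a homotopy pullback square; this requires a long exact Mayer--Vietoris sequence for the groups $F^pH^*$, which holds because the Hodge filtration is \emph{strict} for morphisms of mixed Hodge structures (Deligne, Th\'eor\`eme 1.2.10 together with Corollaire 3.2.13 of \cite{hodge2}). This gives the Brown--Gersten property; one then invokes \cite[Lemma 4.2]{blander} to upgrade from the BG property to descent for \emph{all} Nisnevich hypercovers. Your sketch omits both the strictness input (the only reason the sequence of $F^p$'s is exact) and the BG-to-hyperdescent step, and the proposed direct lifting argument cannot substitute for them.
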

\begin{proof}
%For simplicial presheaves $\Xh$ and $\Yh$, we denote by $\pi(\Xh, \Yh)$ the set of simplicial homotopy classes of maps from $\Xh$ to $\Yh$. 
Let $H^n(\oAh^{*\geq p}(\oX))$ be the $n$th cohomology group of the complex of global sections $\oAh^{*\geq p}(\oX)$. Since the sheaf $\Ah_{\oX}\langle D \rangle ^{*\geq p}$ is cohomologically trivial, %it follows as in the proof of \cite[Theorem 4]{browngersten}, or as in the proof of \cite[\S 1, Proposition 1.16]{mv} for the Nisnevich version, 
it follows that $H^n(\oAh^{*\geq p}(\oX))$ is isomorphic to the sheaf cohomology $H^n(X;\oAh^{*\geq p}_{\oX}\langle D \rangle)$ which, by Deligne's isomorphism \eqref{delignehodge} (\cite[Corollaire 3.2.13]{hodge2}) and the choice of $\oAh^{*\geq p}_{\oX}\langle D \rangle$, is  isomorphic to $F^pH^n(X;\C)$ for any compactification $\oX$ of $X$. 
Since each $\Kh(\oAh^{*\geq p},n)(\oX)$ is a Kan complex, the Dold-Kan correspondence implies that, for every $n,q\ge 0$, there is a natural isomorphism  
\[
\pi_q(\Kh(\oAh^{*\geq p},n)(\oX)) \cong H^{n-q}(\oAh^{*\geq p}(\oX)).
\]
Since the category $C(X)$, over which we take the colimit to define $u^*$, is filtered, and since homotopy groups and cohomology commute with filtered colimits, 
%\[\Alog^{*\geq p}(X) = \colim_{\oX\in C(X)} A^{*\geq p}_{\oX}(\oX)\]
%is filtered, 
this implies that there is a natural isomorphism
\begin{equation}\label{pifpiso}
\pi_q(u^*\Kh(\oAh^{*\geq p},n))(X)) \cong F^pH^{n-q}(X;\C).
\end{equation}
The complex cohomology functor $X\mapsto H^*(X;\C)$ satisfies descent for the Nisnevich topology in the sense that every distinguished square in the Nisnevich topology 

\begin{equation}\label{dsquare}
\xymatrix{
U\times_XV \ar[d] \ar[r] & V \ar[d] \\
U \ar[r] & X}
\end{equation}
induces a long exact Mayer-Vietoris sequence. By \cite[Th\'eor\`eme 1.2.10 and Corollaire 3.2.13]{hodge2}, this sequence respects the Hodge filtration and yields a long exact sequence 
\[
\ldots \to F^pH^{q-1}(X;\C) \to F^pH^q(U\times_X V;\C) \to F^pH^q(U;\C) \oplus F^pH^q(V;\C) \to F^pH^q(X;\C) \to \ldots
\]
Hence the isomorphism \eqref{pifpiso} implies that the simplicial presheaf $\Fh:=u^*\Kh(\oAh^{*\ge p},n)$ satisfies descent for the Nisnevich topology in the sense that every distinguished square in the Nisnevich topology \eqref{dsquare} induces a long exact sequence
\[
\ldots \to \pi_{q+1}(\Fh(X)) \to \pi_q(\Fh(U\times_XV)) \to \pi_q(\Fh(U))\oplus \pi_q(\Fh(V)) \to \pi_q(\Fh(X)) \to \ldots 
\]
By \cite[Lemma 4.2]{blander}, this implies that any projective fibrant replacement $\Fh'$ of the simplicial presheaf $\Fh=u^*\Kh(\oAh^{*\ge p},n)$ is already a local projective fibrant replacement of $\Fh$. Hence $\Fh$ satisfies descent with respect to all Nisnevich hypercovers. 
By Proposition \ref{cordescent} and \eqref{pifpiso}, we get a natural isomorphism 
\[
\Hom_{\hosPresm}(X, u^*\Kh(\oAh^{*\geq p}, n)) \cong F^pH^n(X; \C).
\]
\end{proof}

\begin{remark}
Another way to formulate the argument of the previous proof would be to remark that the isomorphism \eqref{pifpiso} shows that the simplicial presheaf $\Fh=u^*\Kh(\oAh^{*\ge p},n)$ satisfies the Brown-Gersten property for the Nisnevich topology (see \cite{browngersten} for this notion in the Zariski topology and \cite{mv} for the Nisnevich version). As in \cite[\S 3.1]{mv} one could then deduce from this that the set $\Hom_{\hosPresm}(X, \Fh)$ is in natural bijection with $\pi(X,\Fh)$.  
\end{remark}

%
%%%%%%%%%%%%%%%%%%%%%%%%%%%%%%%%%%%%%%%%%
%
%%%%%%%%%%%%%%%%%%%%%%%%%%%%%%%%%%%%%%%%%
%
\bibliographystyle{amsalpha}

\end{document}